\newtheorem{theorem}{Theorem}[section]
\newtheorem{corollary}[theorem]{Corollary}
\newtheorem{open question}[theorem]{Open Question}
\theoremstyle{definition}
\newtheorem{definition}[theorem]{Definition}
\newtheorem{example}[theorem]{Example}
\theoremstyle{remark}
\newtheorem{remark}[theorem]{Remark}
\numberwithin{equation}{section}
\def\DJ{\leavevmode\setbox0=\hbox{D}\kern0pt\rlap
 {\kern.04em\raise.188\ht0\hbox{-}}D}
\begin{document}
\title{The contractive principle for mappings in $b_v(s)$-metric spaces}
\author[H.\ Garai, L.K. \ Dey, P.\ Mondal]
{Hiranmoy Garai$^{1}$,  Lakshmi Kanta Dey$^{2}$, Pratikshan Mondal$^{3}$}
%
\address{{$^{1}$\,} Department of Mathematics,
                    National Institute of Technology
                    Durgapur, India.}
                    \email{hiran.garai24@gmail.com}
\address{{$^{2}$\,} Department of Mathematics,
                    National Institute of Technology
                    Durgapur, India.}
                    \email{lakshmikdey@yahoo.co.in}
\address{{$^{3}$\,} Department of Mathematics,                                         					Durgapur Government College, 
                    Durgapur, India.}
                    \email{real.analysis77@gmail.com}
%
%

\subjclass[2010]{$47$H$10$, $54$H$25$.}
\keywords{$b_v(s)$-metric space; boundedly compact space; sequentially compact space; contractive mapping}
\begin{abstract}
In this article, we introduce the notions of sequentially compactness and boundedly compactness in the framework of a newly defined $b_v(s)$-metric space which is a generalization of usual metric spaces and several other abstract spaces. We establish correlations   between sequentially compactness and boundedly compactness. Moreover, we prove some fixed point results of contractive  mapping  in this setting, from which we can deduce several analogous   fixed point results. Finally, we illustrate  some non-trivial examples  to validate the significances and motivations of this manuscript. 
\end{abstract}
 
\maketitle
%
%
%
%
\section{ Introduction}
There are a few extensions of the notion of a metric space in the literature which were attempted to generalize several known fixed point results in usual metric spaces. One of these is the concept of $b$-metric spaces which was introduced by Czerwik \cite{C4} in $1993$. Afterwards, in the year of $2000$, Branciari \cite{B2} coined the notion of rectangular metric spaces or generalized metric spaces by making a slight modification in the triangle inequality of the usual metric spaces. As a generalization of $b$-metric spaces and rectangular metric spaces, the authors in \cite{GRRS} introduced the concept of  rectangular $b$-metric spaces. Taking into account all these concepts, many mathematicians have elaborated analogous fixed point results in these settings which meliorated and improved the original fixed point theories in usual metric spaces. In the literature, there is a huge amount of relevant texts available for intent readers (see \cite{C6,ADKR,HRPA,KS,SPK,DIRV,BKGK,EKS,DD, GRRS,RPKH,AKL,KR1} and the references therein). 
In $2000$, Branciari \cite{B2} instigated the concept of $v$-generalized metric spaces.

After all such generalizations, Mitrovi\'c and Radenovi\'c \cite{MR} introduced the concept of $b_v(s)$-metric spaces which generalize all the metric spaces discussed above and defined such kind of metric spaces as follows: 
\begin{definition} \cite{MR}
Let $X$ be a non-empty set and let $d:X \times X \to \mathbb{R}$ be a function and $v\in \mathbb{N}$. Then $(X,d)$ is said to be a $b_v(s)$-metric space if for all $x,y\in X$ and for all distinct points $u_1,u_2, \hdots, u_v \in X$, each of them different from $x$ and $y$, the following conditions hold:
\begin{itemize}
\item[{(i)}] $d(x,y) \geq 0$ and $d(x,y)=0$ if and only if $x=y$;
\item[{(ii)}] $d(x,y) = d(y,x)$;
\item[{(iii)}] there exists a real number $s \geq 1$ such that $$d(x,y)\leq s\left[d(x,u_1) + d(u_1,u_2) + \hdots + d(u_v,y)\right].$$
\end{itemize} 
\end{definition}
The authors of \cite{MR} also achieved some interesting fixed point results associated to Banach and Reich contractions.

In this sequel, we recall the definition of  contractive mapping, i.e.,  a self-mapping $T$ on a metric space $(X,d)$ satisfying $$d(Tx,Ty) < d(x,y)$$ for all $x,y \in X$ with $x\neq y$. It may be noted that completeness of the underlying space $X$ does not give the guaranty of existence of fixed point of this map, but if $X$ is compact, it gives the guaranty \cite{E1}.  One of our main motivations in this paper is  to find some (mild) additional criteria on the underlying $b_v(s)$-metric space $X$, which give the existence of fixed point. To proceed in this direction, we introduce the notions of sequentially compactness and boundedly compactness in $b_v(s)$-metric spaces and establish correlations   between them. In such spaces, we prove some fixed point theorems related to contractive mapping which improve and generalize some standard fixed point results due to Edelstein \cite{E1} and Suzuki \cite{S2}.

\section{ Preliminaries}
In $1961$, Edelstein \cite{E1} proved that a contractive self-mapping on a compact metric space has a unique fixed point. One can easily verify that, in this result the compactness of $X$ can not be replaced by completeness. Therefore, some additional conditions have to be imposed on $X$ or on $T$ together with the completeness of $X$ to ensure the existence of fixed point of $T$. Many researchers tried to find such additional conditions.  One of them is given by the following theorem:
\begin{theorem} \cite{C5}
Let $(X,d)$ be a complete metric space and let $T:X \to X$ be a mapping such that $$d(Tx,Ty) < d(x,y)$$ for all $x,y \in X$ with $x\neq y$ and let the following hold:

$(A)$ For any $\epsilon > 0$, there exists $\delta > 0$ such that $$d(x, y) < \epsilon + \delta \Rightarrow d(Tx,Ty) \leq \epsilon$$ for any $x, y \in X$.

Then $T$ has a unique fixed point $z$ and for any $x\in X$, the sequence of iterates $\{T^nx\}$ converges to $z$.
\end{theorem}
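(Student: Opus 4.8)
The plan is to run the classical Meir--Keeler-type argument, using hypothesis $(A)$ in place of the usual $\epsilon$--$\delta$ condition and the strict contractivity to control successive distances. Uniqueness comes first and for free: if $z_1 \neq z_2$ were two fixed points, then $d(z_1,z_2) = d(Tz_1,Tz_2) < d(z_1,z_2)$, a contradiction. So the whole burden is to produce one fixed point and to show that every orbit converges to it.

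Next I would fix an arbitrary $x_0 \in X$, put $x_n = T^n x_0$ and $c_n = d(x_n, x_{n+1})$. If $c_n = 0$ for some $n$, then $x_n$ is already a fixed point and there is nothing more to prove for that orbit; otherwise $c_n > 0$ for all $n$, and the contractive inequality gives $c_{n+1} = d(Tx_n, Tx_{n+1}) < d(x_n, x_{n+1}) = c_n$, so $(c_n)$ strictly decreases to some limit $c \geq 0$. I would then show $c = 0$: if $c > 0$, apply $(A)$ with $\epsilon = c$ to obtain $\delta > 0$, pick $N$ with $c_N < c + \delta$ (possible since $c_n \downarrow c$), and conclude from $(A)$ that $c_{N+1} = d(Tx_N, Tx_{N+1}) \leq c$, which contradicts $c_{N+1} > c$. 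Hence $c_n \to 0$.

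The heart of the argument is showing that $(x_n)$ is Cauchy. Given $\epsilon > 0$, take $\delta > 0$ from $(A)$ (shrinking it so that $\delta \leq \epsilon$, which is harmless) and choose $N$ with $c_N < \delta$. By induction on $k$ I would prove $d(x_N, x_{N+k}) < \epsilon + \delta$ for all $k \geq 0$: the case $k \leq 1$ is clear, and for the step one writes $d(x_N, x_{N+k+1}) \leq d(x_N, x_{N+1}) + d(Tx_N, Tx_{N+k})$, feeds the inductive bound $d(x_N, x_{N+k}) < \epsilon + \delta$ into $(A)$ to get $d(Tx_N, Tx_{N+k}) \leq \epsilon$, and adds $c_N < \delta$. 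Consequently $d(x_m, x_n) < 2(\epsilon + \delta) \leq 4\epsilon$ for all $m, n \geq N$, and since $\epsilon$ is arbitrary, $(x_n)$ is Cauchy. Completeness yields $x_n \to z$ for some $z \in X$, and letting $n \to \infty$ in $d(z, Tz) \leq d(z, x_{n+1}) + d(Tx_n, Tz) \leq d(z, x_{n+1}) + d(x_n, z)$ forces $Tz = z$. Running this with an arbitrary starting point produces a fixed point, necessarily $z$ by uniqueness, so $\{T^n x\} \to z$ for every $x \in X$.

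I expect the only subtle points to be (i) the verification that $c_n \to 0$, where one must exploit that the limit $c$ is approached strictly from above in order to turn $(A)$ into a contradiction, and (ii) keeping the induction in the Cauchy step tight, i.e.\ reproducing the bound $\epsilon + \delta$ exactly rather than letting it accumulate --- the trick being to measure all distances from the single anchor point $x_N$ and to route the estimate through $(A)$ at each step. Everything else is a routine application of the triangle inequality and completeness.
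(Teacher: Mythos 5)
Your proof is correct and complete. Note that the paper itself offers no proof of this theorem---it is quoted from \'{C}iri\'{c}'s paper \cite{C5} as background---so there is no internal argument to compare against; what you have written is the standard and correct route: uniqueness from strict contractivity, $d(x_n,x_{n+1})\downarrow 0$ via condition $(A)$ applied at the putative positive limit, the anchored induction $d(x_N,x_{N+k})<\epsilon+\delta$ to get the Cauchy property, and completeness plus $d(Tx_n,Tz)\le d(x_n,z)$ to identify the limit as the fixed point. The two places you flag as subtle are exactly the right ones, and your handling of each is sound.
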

Suzuki introduced another additional weaker assumption with the completeness of $(X,d)$ to assure fixed point of $T$ in the following theorem:
\begin{theorem} \cite{S2}
Let $(X,d)$ be a complete metric space and let $T:X \to X$ be a mapping such that $$d(Tx,Ty) < d(x,y)$$ for all $x,y \in X$ with $x\neq y$ and let the following hold:

$(B)$ For any $x\in X$ and for any $\epsilon > 0$, there exists $\delta > 0$ such that $$d(T^ix,T^jx) < \epsilon + \delta \Rightarrow d(T^{i+1}x,T^{j+1}x)\leq \epsilon$$ for any $i,j \in \mathbb{N} \cup \{0\}$.

Then $T$ has a unique fixed point $z$ and for any $x\in X$, the sequence of iterates $\{T^nx\}$ converges to $z$.
\end{theorem}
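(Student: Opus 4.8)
The plan is to establish the theorem in four stages: uniqueness of the fixed point, decay of consecutive distances along an orbit, the Cauchy property of each orbit, and finally identification of the limit as a fixed point. Uniqueness is immediate and I will dispose of it first: if $z_1\neq z_2$ were both fixed, then $d(z_1,z_2)=d(Tz_1,Tz_2)<d(z_1,z_2)$, a contradiction. So it remains to show that for an arbitrary $x\in X$ the sequence $\{T^nx\}$ converges to a fixed point; by uniqueness that limit is the same point $z$ for every $x$.

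Fix $x$ and write $x_n=T^nx$. If $x_n=x_{n+1}$ for some $n$ we are done, so I assume $x_n\neq x_{n+1}$ for all $n$. Then $d(x_{n+1},x_{n+2})=d(Tx_n,Tx_{n+1})<d(x_n,x_{n+1})$, so $\{d(x_n,x_{n+1})\}$ strictly decreases to some $r\ge 0$. To see $r=0$, I will assume $r>0$, apply condition $(B)$ to this $x$ with $\epsilon=r$ to get $\delta>0$, pick $n$ with $d(x_n,x_{n+1})<r+\delta$, and invoke $(B)$ (with indices $n,n+1$) to obtain $d(x_{n+1},x_{n+2})\le r$, contradicting strict decrease. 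Hence $d(x_n,x_{n+1})\to 0$.

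The main step — and the one I expect to be the only real obstacle — is to prove $\{x_n\}$ is Cauchy, and this is where the quantitative strength of $(B)$ must be exploited. The plan is a proof by contradiction. If $\{x_n\}$ is not Cauchy, there is $\epsilon_0>0$ such that for every $N$ there are indices $N\le i<j$ with $d(x_i,x_j)\ge\epsilon_0$; set $\epsilon=\epsilon_0/2$, so such pairs satisfy $d(x_i,x_j)>\epsilon$. I apply $(B)$ to this $x$ and this $\epsilon$, obtaining $\delta\in(0,\epsilon)$, put $\eta=\delta/4$, and choose $N_1$ with $d(x_n,x_{n+1})<\eta$ for all $n\ge N_1$. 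Now pick a pair $N_1+1\le a<j$ with $d(x_a,x_j)>\epsilon$, and let $b$ be the least index in $(a,j]$ with $d(x_a,x_b)>\epsilon$; by the minimality of $b$ together with the triangle inequality,
\[
\epsilon<d(x_a,x_b)\le d(x_a,x_{b-1})+d(x_{b-1},x_b)\le \epsilon+\eta .
\]
Then, since $a-1\ge N_1$ and $b-1\ge N_1$,
\[
d(x_{a-1},x_{b-1})\le d(x_{a-1},x_a)+d(x_a,x_b)+d(x_b,x_{b-1})<\eta+(\epsilon+\eta)+\eta=\epsilon+3\delta/4<\epsilon+\delta .
\]
Applying $(B)$ with the indices $a-1,\,b-1$ now yields $d(x_a,x_b)\le\epsilon$, contradicting $d(x_a,x_b)>\epsilon$. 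Thus $\{x_n\}$ is Cauchy, and completeness gives $x_n\to z$ for some $z\in X$. The delicate point, which the above is designed to handle, is to trap a pair of iterates strictly inside the band $(\epsilon,\epsilon+\delta)$ whose one-step predecessor still lies below $\epsilon+\delta$; the minimal-index choice of $b$ together with the calibration $\eta=\delta/4$ is exactly what makes this possible.

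It then remains to identify $z$. Since $d(Tz,x_{n+1})=d(Tz,Tx_n)\le d(z,x_n)\to 0$, the sequence $\{x_{n+1}\}$ converges both to $Tz$ and to $z$, so uniqueness of limits in a metric space forces $Tz=z$. Finally, running this whole argument from an arbitrary starting point produces a fixed point of $T$, which by the uniqueness already established must coincide with $z$; hence $\{T^nx\}\to z$ for every $x\in X$.
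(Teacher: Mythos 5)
Your proof is correct. Note that the paper does not prove this statement directly: it quotes it from Suzuki and later recovers it as the $v=1$, $s=1$ special case of its Theorem \ref{th3} on complete $b_v(1)$-metric spaces. The two arguments agree on the easy parts (uniqueness, and the monotonicity-plus-$(B)$ argument showing $d(x_n,x_{n+1})\to 0$), but diverge at the Cauchy step. The paper proves Cauchyness \emph{forwards}: it also establishes $d(x_n,x_{n+2})\to 0$, fixes $\epsilon$, and runs an induction on $j$ using the $(v+1)$-term polygon inequality to show $d(x_{n+2v+1},x_{n+2v+j})\le \epsilon/2$ for all $j$ and all large $n$. You prove it \emph{by contradiction}: assuming a persistent gap $\epsilon$, you locate a minimal escape index $b$ so that $d(x_a,x_b)$ is trapped in $(\epsilon,\epsilon+\eta]$, pull back one step to get $d(x_{a-1},x_{b-1})<\epsilon+\delta$, and let $(B)$ push $d(x_a,x_b)$ back below $\epsilon$. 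Your route is shorter and avoids the auxiliary limit $d(x_n,x_{n+2})\to 0$ entirely, and your identification of the limit as a fixed point (via $d(Tz,Tx_n)\le d(z,x_n)\to 0$ and uniqueness of limits) is likewise cleaner than the paper's case analysis; the price is that both of these steps lean on the genuine two-term triangle inequality and on uniqueness of limits, so they are tailored to the metric case stated here and would not transfer verbatim to the $b_v(1)$ setting in which the paper actually works. As a proof of the quoted metric-space theorem, your argument is complete; the only cosmetic slips are that the pullback inequality should read $d(x_{a-1},x_{b-1})\le d(x_{a-1},x_a)+d(x_a,x_{b-1})\le d(x_{a-1},x_a)+d(x_a,x_b)+d(x_b,x_{b-1})$ (two applications of the triangle inequality, which is clearly what you intend), and that the restriction $\delta<\epsilon$ is harmless but never actually used.
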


Mitrovi\'c and Radenovi\'c introduced the notions of Cauchy sequences and completeness in $b_v(s)$-metric space.
\begin{definition} \cite{MR}
Let $(X,d)$ be a $b_v(s)$-metric space, $\{x_n\}$ be a sequence in $X$ and $x\in X$. Then, the following are defined as follows:
\begin{itemize}
\item[{(i)}] The sequence $\{x_n\}$ is said to be a Cauchy sequence, if for any $\epsilon>0$ there exists $N\in \mathbb{N}$ such that $$d(x_n,x_{n+p})< \epsilon$$ for all $n>N$ and for all $p \in \mathbb{N}$ or equivalently, if $$\displaystyle \lim_{n\to \infty}d(x_n,x_{n+p})=0$$ for all $p \in \mathbb{N}$.
\item[{(ii)}] The sequence $\{x_n\}$ is said to be convergent to $x$,  if for any $\epsilon>0$ there exists $N\in \mathbb{N}$ such that $$d(x_n,x)< \epsilon $$ for all $n>N$ and this fact is represented by $$\displaystyle \lim_{n\to \infty}x_n =x ~~\mbox{or}~~ x_n \to x ~~\mbox{as}~~ n\to \infty.$$
\item[{(iii)}] $(X,d)$ is said to be complete $b_v(s)$-metric space if every Cauchy sequence in $X$ converges to some $x\in X$.
\end{itemize}
\end{definition}
In this manuscript, we introduce the concepts of sequentially compactness and boundedly compactness of a $b_v(s)$-metric space.
\begin{definition}
Let $(X,d)$ be a $b_v(s)$-metric space and $\{x_n\}$ be a sequence in $X$. Then the sequence $\{x_n\}$ is said to be a bounded sequence if there exists a real number $M>0$ such that $$d(x_n,x_m) \leq M ~~\mbox{for all}~~ n,m\in \mathbb{N}$$ or equivalently $$d(x_n,x_{n+k}) \leq M ~~\mbox{for all}~~ n,k\in \mathbb{N}.$$
\end{definition}
\begin{definition}
Let $(X,d)$ be a $b_v(s)$-metric space, then $X$ is said to be sequentially compact if every sequence $\{x_n\}$ in $X$ has a subsequence which converges to some point of $X$.
\end{definition}

\begin{definition}
Let $(X,d)$ be a $b_v(s)$-metric space, then $X$ is said to be boundedly compact if every bounded sequence $\{x_n\}$ in $X$ has a subsequence which converges to some point of $X$.
\end{definition}
Clearly it follows from definition that, every boundedly compact $b_v(s)$-metric space is sequentially compact but not conversely. To investigate this we frame the following example:
\begin{example}
Consider the set $X=X_1 \cup X_2$ where $X_1=\{\frac{1}{n}:n\in \mathbb{N} ~~\mbox{and}~~ n\geq 2\}$ and $X_2=\{0,1,2\}$. We define a function $d:X\times X \to \mathbb{R}$ by
$$d(x,y)=\begin{cases}
|n-m|, \text{if}~~ x,y \in X_1 ~~ \mbox{and}~~ x=\frac{1}{n}, y=\frac{1}{m}~~ \mbox{and}~~ |n-m| \neq 1,3;\\
\frac{1}{2}, \text{if}~~ x,y \in X_1 ~~ \mbox{and}~~ x=\frac{1}{n}, y=\frac{1}{m}~~ \mbox{and}~~ |n-m| = 1,3;\\
n, \text{if}~~ x\in X_1, y\in X_2 ~~ \mbox{and}~~ x=\frac{1}{n} ~~\text{or}~~ y\in X_1, x\in X_2 ~~ \mbox{and}~~ y=\frac{1}{n};\\
5, \text{if}~~  x,y \in X_2 ~~ \mbox{and}~~ x\neq y;\\
0, \text{if}~~  x,y \in X_2 ~~ \mbox{and}~~ x= y.
\end{cases}$$
Then, it is an easy task to verify that $(X,d)$ is a $b_3(2)$-metric space.

Note that the sequence $\{\frac{1}{n+2}\}$ has no subsequence which converges to some point of $X$. So, $(X,d)$ is not sequentially compact. But, one can easily check that, a sequence $\{x_n\}$ in $X$ is bounded if and only if the range of the sequence $\{x_n\}$ is finite. Thus every bounded sequence in $X$  has a  subsequence which converges to some point of  $X$, i.e, $(X,d)$ is boundedly compact.
\end{example}
 \section{ Fixed Point Results}
At the beginning of this section, we prove a fixed point theorem related to contractive mappings in the structure of sequentially compact $b_v(s)$-metric spaces.
\begin{theorem} \label{th1}
Let $(X,d)$ be a sequentially compact $b_v(s)$-metric space and $T:X\to X$ be a mapping such that $$d(Tx,Ty) < d(x,y)$$ for all $x,y \in X$ with $x \neq y$. Then $T$ has a unique fixed point and for any $x\in X$ the sequence  $\{T^nx\}$ converges to that fixed point.
\end{theorem}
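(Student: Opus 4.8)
The plan is to transport the classical Edelstein argument (minimise the displacement $x\mapsto d(x,Tx)$ along an orbit, show an accumulation point is a fixed point, then prove every orbit converges to it) to the $b_v(s)$-setting; the real difficulty is that here $d$ need not be continuous and limits of sequences need not be unique, so every step must be redone using only the rectangular inequality $(iii)$ together with its multiplicative constant $s$.

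First I would record that $T$ is sequentially continuous: since $d(Tx,Ty)\le d(x,y)$ for all $x,y$ (strict when $x\neq y$, trivial when $x=y$), $x_n\to x$ forces $Tx_n\to Tx$, hence $T^m x_n\to T^m x$ for each fixed $m$. Next fix $x_0\in X$ and put $x_n=T^n x_0$. If $x_{n_0}=x_{n_0+1}$ for some $n_0$ that point is fixed and the orbit is eventually constant, so assume $x_n\neq x_{n+1}$ for every $n$; then $d_n:=d(x_n,x_{n+1})$ is strictly decreasing, so $d_n\downarrow r$ for some $r\ge 0$, and moreover the $x_n$ are pairwise distinct, since a coincidence $x_n=x_m$ with $m\ge n+2$ would make $\{d_k\}$ eventually periodic, contradicting strict monotonicity.

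The heart of the matter is to show $r=0$. By sequential compactness take $x_{n_k}\to z$; then $x_{n_k+1}\to Tz$ and $x_{n_k+2}\to T^2z$ by continuity. Because the $x_n$ are distinct, only finitely many can equal any prescribed point, so after deleting finitely many indices each application of $(iii)$ below is legitimate. Applying $(iii)$ to a chain joining $z$ to $Tz$ that runs through $x_{n_k},x_{n_k+1},\dots$ and to a chain joining $x_{n_k}$ to $x_{n_k+1}$ that runs through $z,Tz$, then letting $k\to\infty$ (so that $d(x_{n_k},z)\to 0$, $d(x_{n_k+1},Tz)\to 0$, $d_{n_k}\to r$), one sandwiches $d(z,Tz)$, and likewise $d(Tz,T^2z)$, between positive multiples of $r$; so $r>0$ would force $z\neq Tz$, whence $d(Tz,T^2z)<d(z,Tz)$ collides with the lower bounds — a contradiction — giving $r=0$. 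I expect this sandwich, and in particular defeating the constant $s$, to be the main obstacle: it is precisely the point where one should first prove and then invoke an auxiliary lemma describing the behaviour of $d$ along a convergent sequence with distinct terms (a semicontinuity/"sandwich" estimate of the form $\tfrac1s d(x,y)\le\liminf d(x_n,y)\le\limsup d(x_n,y)\le s\,d(x,y)$ and its two-sequence analogue).

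With $r=0$ in hand, feeding $x_{n_k}\to z$, $x_{n_k+1}\to Tz$ and $d(x_{n_k},x_{n_k+1})\to 0$ into $(iii)$ yields $d(z,Tz)=0$, so $z=Tz$ and a fixed point exists; uniqueness is immediate, since two distinct fixed points $z,w$ would give $d(z,w)=d(Tz,Tw)<d(z,w)$. Finally, for an arbitrary orbit $x_n=T^n x_0$ and this fixed point $z$, the numbers $d(x_{n+1},z)=d(Tx_n,Tz)\le d(x_n,z)$ decrease to some $\ell\ge 0$; if $\ell>0$, a subsequence $x_{n_k}\to w$ with $w\neq z$ would (again via the auxiliary estimate and $(iii)$) force $d(x_{n_k+1},z)$ close to $d(Tw,Tz)<d(w,z)$, contradicting $d(x_{n_k+1},z)\to\ell$; hence $\ell=0$ and $x_n\to z$.
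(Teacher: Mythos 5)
Your overall scheme is the same as the paper's: iterate $T$ from an arbitrary point, show the orbit has pairwise distinct terms, extract a convergent subsequence $x_{n_k}\to z$ by sequential compactness, prove $z=Tz$ by chaining the polygonal inequality (iii) through consecutive orbit points, obtain uniqueness from the strict contraction, and deduce convergence of the whole orbit from the monotonicity of $d(x_n,z)$ together with the cluster point $0$. You have also correctly located the delicate point: $d$ need not be continuous, so one must actually prove $r:=\lim_n d(x_n,x_{n+1})=0$ and control quantities like $d(z,x_{n_k+1})$. (The paper simply asserts $\lim_k d(x_{n_k},x_{n_k+1})=0$ and implicitly uses $d(z,x_{n_k+1})\to 0$ and $d(x_{n_k+v-1},z)\to 0$ without argument, so on this point your proposal is more candid than the source.)

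However, the mechanism you propose for proving $r=0$ does not close when $s>1$. Even granting the sandwich estimate in the form $\tfrac1s d(x,y)\le\liminf_n d(x_n,y)\le\limsup_n d(x_n,y)\le s\,d(x,y)$, it only places both $d(z,Tz)$ and $d(Tz,T^2z)$ in the interval $[r/s,\,sr]$; the strict inequality $d(Tz,T^2z)<d(z,Tz)$ is perfectly compatible with two numbers lying in that interval, so no contradiction arises. For $s=1$ the interval degenerates to $\{r\}$ and Edelstein's classical argument works — which is precisely why the constant $s$ is the obstacle, and your plan does not actually defeat it. Iterating along $z,Tz,T^2z,\dots$ only yields a strictly decreasing sequence bounded below by $r/s$, again no contradiction. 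In addition, for $v\ge 2$ the sandwich lemma is not available in the clean one-point form you state: axiom (iii) requires $v$ distinct interpolating points, so the chains must be threaded through $x_{n_k},\dots,x_{n_k+v-1}$, and since $x_{n_k+j}\to T^jz$ (not $z$), the resulting bounds pick up terms such as $\limsup_k d(x_{n_k+v-1},Tz)$, which relate to $d(T^{v-1}z,Tz)$ rather than to $0$ or to multiples of $r$. So the crux — that the cluster point of the orbit is a fixed point — is not established by your argument; a genuinely new idea is needed to handle $s>1$, and this is also exactly the step that the paper's own proof leaves unjustified.
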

\begin{proof}
Let $x_0 \in X$ be arbitrary but fixed. We consider the sequence $\{x_n\}$ where $x_n=T^n x_0$ for each natural number $n$.

If $x_n = x_{n+1}$ for some $n\in \mathbb{N}$, then clearly $T$ has a fixed point. So we assume that $x_n \neq x_{n+1}$ for all $n\in \mathbb{N}$.

In this case we claim that all terms of $\{x_n\}$ are distinct. To prove our claim, we presume that  $x_n = x_m$ for some natural numbers  $n,m$ with $m>n$. Then $Tx_n = Tx_m$ i.e. $x_{n+1} = x_{m+1}.$ Therefore, 
\begin{align*}
d(x_{n+1},x_n) & =  d(x_{m+1},x_m)\\
& <  d(x_m,x_{m-1})\\
&  \hdots\\
&  \hdots\\
& <  d(x_{n+1},x_n)
\end{align*}
which is a contradiction. This proves our claim.

Now, since  $(X,d)$ is sequentially compact, so the sequence $\{x_n\}$ must have a convergent subsequence, say $\{x_{n_k}\}$ and let it converges to $z$, i.e., $\displaystyle \lim_{k\to \infty} x_{n_k} = z.$ Next, we show that $z$ is a fixed point of $T$. If $x_{n_k} =z$  for finitely many $n_k$, then we can exclude those $x_{n_k}$ from $\{x_{n_k}\}$ and  assume that $x_{n_k} \neq z$ for all $n_k$. If $x_{n_k} =z$  for infinitely many $n_k$, then $x_{n_k+1}=Tz$ for infinitely many $n_k$. So, $\{x_{n_k}\}$ contains a subsequence which converges to $Tz$, so we must have $z=Tz$, i.e., $z$ is a fixed point of $T$.

If $x_{n_k} =Tz$  for finitely many $n_k$, then we can exclude those $x_{n_k}$ from $\{x_{n_k}\}$ and  assume that $x_{n_k} \neq Tz$ for all $n_k$. If $x_{n_k} =Tz$  for infinitely many $n_k$, then  $\{x_{n_k}\}$ contains a subsequence which converges to $Tz$, so we must have $z=Tz$, i.e., $z$ is a fixed point of $T$.

Finally, if $x_{n_k} \neq z,Tz$ for all $n_k \in \mathbb{N}$. Then,
\begin{align} \label{eqn1}
d(z,Tz) &\leq  s\{d(z,x_{n_k +1}) + d(x_{n_k + 1},x_{n_k + 2}) + d(x_{n_k + 2},x_{n_k + 3}) + \dots  + \nonumber\\ & d(x_{n_k + v - 2},x_{n_k + v - 1}) + d(x_{n_k + v - 1},x_{n_k + v}) + d(x_{n_k + v},Tz)\}\nonumber\\
& <  s\{d(z,x_{n_k +1}) + d(x_{n_k + 1},x_{n_k + 2}) + d(x_{n_k + 2},x_{n_k + 3}) + \dots  + \nonumber\\ 
& d(x_{n_k + v - 2},x_{n_k + v - 1}) + d(x_{n_k + v - 1},x_{n_k + v}) + d(x_{n_k + v-1},z)\}.
\end{align}
Since $\displaystyle \lim_{k\to \infty} d(x_{n_k},z)=0$ and  $\displaystyle \lim_{k\to \infty} d(x_{n_k},x_{n_k+1})=0$, so taking limit as $k\to \infty$ in both sides of the Equation \ref{eqn1}, we get,
$$d(z,Tz)\leq 0 \Rightarrow d(z,Tz)=0.$$ This shows that $Tz=z$, i.e, $z$ is a fixed point of $T$. Hence combining all possible cases, we conclude that $z$ is a fixed point of $T$.

Now, we examine the uniqueness of this fixed point. To do this, let $z_1$ be another fixed point of $T$. Then we have 
\begin{align*}
d(z,z_1) &= d(Tz,Tz_1)\\
&< d(z,z_1),
\end{align*}
which is a contradiction. Hence, $z$ is the only fixed point of $T$.

Finally, we prove  that $\{x_n\}$ converges to $z$. If $x_n =z$ for some $n\in \mathbb{N}$, then clearly $\{x_n\}$ converges to $z$. Let us assume that  $x_n \neq z$ for all $n \in \mathbb{N}$. Since, $\{x_n\}$ contains a subsequence $\{x_{n_k}\}$ which converges to $z$, so $z$ is a cluster point of the sequence $\{x_n\}$. Let $z'$ be another cluster point of $\{x_n\}$, then $\{x_n\}$ contains a subsequence which converges to $z'$. Then by similar arguments as above we can show that $z'$ is a fixed point of $T$ and this will again lead to a contradiction. Henceforth, $z$ is the only cluster point of $\{x_n\}$.

Now, consider the sequence $\{s_n\}$ of real numbers where $s_n = d(x_n,z)$ for all $n\in \mathbb{N}$. Therefore
$$s_{n_k} = d(x_{n_k},z) \to 0 \mbox{  as  } n\to \infty.$$
This shows that the sequence $\{s_n\}$ contains the subsequence $\{s_{n_k}\}$ which converges to $0$ and so $0$ is a cluster point of  $\{s_n\}$.

Now, 
\begin{align*}
s_{n+1} & = d(x_{n+1},z)\\
&= d(Tx_n,Tz)\\
&< d(x_n,z) = s_n
\end{align*}
Consequently, $\{s_n\}$ is a decreasing sequence of non-negative real numbers and hence convergent. But $0$ is a cluster point of the convergent sequence $\{s_n\}$, so $\{s_n\}$ must converge to $0$. Therefore
$$s_n \to 0 \mbox{  as  } n\to \infty $$
$$ \Rightarrow d(x_n,z) \to 0 \mbox {  as  }  n\to \infty. $$
Thus, $\{x_n\}$ converges to $z$, i.e., $\{T^nx_0\}$ converges to $z$.

Since $x_0 \in X$ was arbitrary, it follows that $\{T^nx\}$ converges to the fixed point $z$ for any $x\in X$.
\end{proof}
From Theorem \ref{th1} we can deduce the following corollary by taking $s=1$ and $v=1$.
\begin{corollary} \label{cr1}
Let $(X,d)$ be a compact metric space and $T:X\to X$ be mapping such that $$d(Tx,Ty)<d(x,y)$$ for all $x,y \in X$ with $x\neq y$. Then $T$ has a unique fixed point $z$ and for any $x\in X$ the sequence $\{T^nx\}$ converges to $z$.
\end{corollary}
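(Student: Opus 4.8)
The plan is to read Corollary \ref{cr1} as nothing more than the instance $s=1$, $v=1$ of Theorem \ref{th1}, so the whole task reduces to checking that a compact metric space does fall under the hypotheses of that theorem. First I would observe that every metric space $(X,d)$ is automatically a $b_1(1)$-metric space: conditions (i) and (ii) in the definition of a $b_v(s)$-metric space are precisely the first two metric axioms, and for $v=1$, $s=1$ condition (iii) reads $d(x,y)\le d(x,u_1)+d(u_1,y)$ for every $u_1\in X$ distinct from both $x$ and $y$, which is just a particular case of the ordinary triangle inequality and therefore certainly holds. I would emphasize that this reduction weakens, rather than strengthens, the structural requirement, so no hypothesis is being smuggled in.

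Next I would note that the $b_v(s)$-notion of convergence specializes verbatim to the usual metric convergence, since in both cases ``$x_n\to x$'' means exactly that for each $\varepsilon>0$ there is $N$ with $d(x_n,x)<\varepsilon$ for all $n>N$. Hence the notion of ``sequentially compact'' introduced above coincides, in the metric case, with the classical one: every sequence admits a subsequence converging to a point of $X$. At this point the only non-routine ingredient is the standard fact that a compact metric space is sequentially compact (for instance via the Bolzano–Weierstrass/limit-point property of compact metric spaces, or through total boundedness together with completeness); I would simply cite it.

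With these observations in place, $(X,d)$ is a sequentially compact $b_1(1)$-metric space, the contractive inequality $d(Tx,Ty)<d(x,y)$ for $x\neq y$ is literally the same in both statements, and Theorem \ref{th1} applies directly, yielding a unique fixed point $z$ of $T$ together with $\{T^nx\}\to z$ for every $x\in X$. I do not anticipate any genuine obstacle here; the only point demanding a moment's care is precisely the verification that setting $v=1$ and $s=1$ does not accidentally tighten condition (iii), which it does not, as explained above.
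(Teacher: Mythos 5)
Your proposal is correct and follows exactly the route the paper takes: the paper derives Corollary \ref{cr1} from Theorem \ref{th1} by setting $s=1$ and $v=1$, and your careful checks (that a metric space is a $b_1(1)$-metric space, that the notions of convergence and sequential compactness coincide with the classical ones, and that compactness of a metric space gives sequential compactness) merely make explicit what the paper leaves implicit.
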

\begin{remark}
Corollary \ref{cr1} extends Remark $3.1$ of \cite{E1}.
\end{remark}
In the above theorem sequentially compactness condition cannot be replaced by boundedly compactness of the space which follows from the following example.
\begin{example} \label{e1}
Let $X=\mathbb{N}$ and define a function $d:X \times X \to \mathbb{R}$ by
$$d(x,y)=
\begin{cases}
0, ~~\text{if}~~ x=y;\\
10|x-y|, ~~\text{if}~~ x,y<10;\\
\frac{|x-y|}{10}, ~~\text{if}~~ x,y \geq 10;\\
5, ~~\text{otherwise}. 
\end{cases}$$ 
Then it is easy to see  that, $(X,d)$ is a $b_2(1000)$-metric space. Also, one can easily check that the $b_2(1000)$-metric space $(X,d)$ is boundedly compact but not sequentially compact.

Next we consider a function $T:X\to X$ defined by
$$Tx=
\begin{cases}
x+20, ~~\text{if}~~ x\leq 10;\\
10, ~~\text{if}~~x>10.
\end{cases}$$
Let $x,y\in X$ be such that $x\neq y$. Let us consider the following cases:

Case I: Let, $x,y \leq 10$, then $$Tx=x+20 ~~\mbox{and}~~ Ty=y+20.$$
Therefore, 
\begin{align*}
d(Tx,Ty)&= \frac{|(x+20)-(y+20)|}{10}\\
&= \frac{|x-y|}{10}\\
\end{align*} whereas 
$$d(x,y)=
\begin{cases}
10|x-y|, ~~\text{if}~~ x,y<10;\\
5, ~~\text{if either $x=10$ or $y=10$}.
\end{cases}$$
So, $d(Tx,Ty) < d(x,y)$.

Case II: Let, $x,y > 10$, then $$Tx=10 ~~\mbox{and}~~ Ty=10.$$
So, it is obvious that $d(Tx,Ty) < d(x,y)$.

Case III: Let, $x>10$ and $y\leq 10$, then $$Tx=10 ~~\mbox{and}~~ Ty=y+20.$$
Therefore, 
\begin{align*}
d(Tx,Ty)&= \frac{|10-(y+20)|}{10}\\
&= \frac{y+10}{10}\\
& \leq  2.
\end{align*} and
$$d(x,y)=5.$$
Thus it follows that  $d(Tx,Ty) < d(x,y)$.
Also, from the formulation of $T$ it is clear that $T$ has no fixed point.
\end{example}

Therefore we are in search of an additional condition either on X or on T with the boundedly compactness of X to get a unique fixed point of T. Here in the next theorem we deal with one of such additional condition.
\begin{theorem} \label{th2}
Let $(X,d)$ be a boundedly compact $b_v(s)$-metric space and $T:X\to X$ be a mapping such that $$d(Tx,Ty) < d(x,y)$$ for all $x,y \in X$ with $x \neq y$. Also assume that, for any $x \in X$ and for any $k\in \mathbb{N}$ with $k\geq v$ there exists a real number $M>0$ (depending on $x$) such that $d(x,T^{k-v}x) \leq M$.   Then $T$ has a unique fixed point and for any $x\in X$ the sequence  $\{T^nx\}$ converges to that fixed point.
\end{theorem}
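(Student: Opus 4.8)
The plan is to follow the proof of Theorem \ref{th1} almost line for line. The one structural difference is that bounded compactness, unlike sequential compactness, extracts a convergent subsequence only from a \emph{bounded} sequence; so the real work is to check that the Picard orbit $\{T^n x_0\}$ is bounded, which is exactly what the extra hypothesis is tailored to provide.

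First I would fix $x_0 \in X$ and put $x_n = T^n x_0$. If $x_n = x_{n+1}$ for some $n$, then $x_n$ is a fixed point and we are done, so assume $x_n \neq x_{n+1}$ for all $n$. Precisely as in Theorem \ref{th1}, if $x_n = x_m$ with $m > n$ then $x_{n+1} = x_{m+1}$ and we get the chain $d(x_{n+1},x_n) = d(x_{m+1},x_m) < d(x_m,x_{m-1}) < \cdots < d(x_{n+1},x_n)$, a contradiction; hence the iterates $x_n$ are pairwise distinct. Now apply the additional hypothesis at the point $x_0$: it furnishes $M > 0$ with $d(x_0, T^{k-v}x_0) \le M$ for every $k \ge v$, that is, $d(x_0, x_j) \le M$ for every $j \ge 0$. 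For $n \ge 1$ and $k \ge 1$, iterating the strict contraction $n$ times (legitimate because at each step the two iterates in question are distinct) yields $d(x_n, x_{n+k}) < d(x_{n-1}, x_{n+k-1}) < \cdots < d(x_0, x_k) \le M$; adding the trivial cases $n = 0$ and $k = 0$, we get $d(x_n, x_m) \le M$ for all $n, m$, so $\{x_n\}$ is bounded. By bounded compactness it then has a subsequence $x_{n_k} \to z$ for some $z \in X$.

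From this point the argument of Theorem \ref{th1} applies verbatim. A case analysis --- according as $x_{n_k} = z$ for infinitely many $k$, or $x_{n_k} = Tz$ for infinitely many $k$, or (discarding finitely many terms) $x_{n_k} \ne z, Tz$ for all $k$ --- shows $z = Tz$; in the remaining case one bounds $d(z, Tz)$ via the $b_v(s)$-inequality using the $v$ pairwise distinct intermediate points $x_{n_k+1}, \dots, x_{n_k+v}$ (pairwise distinct by the previous step, and, after deleting finitely many $k$, different from $z$ and $Tz$), estimates the terminal term by $d(x_{n_k+v}, Tz) = d(Tx_{n_k+v-1}, Tz) < d(x_{n_k+v-1}, z)$, and passes to the limit using $d(x_{n_k}, z) \to 0$ and $d(x_{n_k}, x_{n_k+1}) \to 0$ to obtain $d(z, Tz) \le 0$. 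Uniqueness is immediate, since a second fixed point $z_1$ would give $d(z, z_1) = d(Tz, Tz_1) < d(z, z_1)$. For convergence of the full sequence: $z$ is a cluster point of $\{x_n\}$; any cluster point produces (again by boundedness together with bounded compactness) a convergent subsequence and hence, by the same reasoning, is a fixed point, so equals $z$; thus $z$ is the only cluster point. Finally $s_n := d(x_n, z)$ is strictly decreasing by the contraction condition, bounded below by $0$, hence convergent, and since $s_{n_k} \to 0$ it converges to $0$, i.e. $x_n \to z$; as $x_0$ was arbitrary, $\{T^n x\} \to z$ for every $x \in X$.

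The step I expect to demand the most attention is the boundedness reduction in the second paragraph: the telescoping of the strict inequality down to $d(x_0, x_k)$ is valid only once all iterates are known to be distinct, so that claim has to be secured first. The other delicate point --- inherited from Theorem \ref{th1} --- is the limit passage inside the $b_v(s)$-inequality, where $d$ is not assumed continuous; this is why the estimate is arranged so that each summand tends to $0$ on its own rather than invoking continuity of $d$.
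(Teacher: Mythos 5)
Your proof is correct, and it shares the paper's overall architecture: everything reduces to showing that the Picard orbit $\{x_n\}$ is bounded, after which bounded compactness yields a convergent subsequence and the argument of Theorem \ref{th1} takes over (the paper likewise just writes ``proceeding as Theorem \ref{th1}'' at that point, so your reliance on it is no weaker than the paper's). Where you genuinely diverge is in the boundedness step, which is the only real content of this proof. The paper reads the hypothesis exactly as you do (one $M$ serving all $k\geq v$, i.e.\ $d(x_0,x_j)\leq M$ for all $j\geq 0$), but then estimates $d(x_n,x_{n+k})$ through the $b_v(s)$ polygon inequality, bounding $v$ of the resulting links by $d(x_0,x_1)$ and contracting the remaining link down to $d(x_0,x_{k-v})$; this forces a case split into $k\geq v$ and $k<v$, with a separate finite maximum $M_2=\max_{k<v}d(x_v,x_k)$ in the second case, and produces the bound $s\{v\,d(x_0,x_1)+M\}$. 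Your telescoping $d(x_n,x_{n+k})<d(x_{n-1},x_{n+k-1})<\cdots<d(x_0,x_k)\leq M$ is more elementary: it never invokes the $b_v(s)$-inequality, needs no case analysis, and yields the cleaner uniform bound $M$. The one prerequisite --- that all iterates are pairwise distinct, so that each strict inequality is legitimately an application of the contractive condition --- is the same prerequisite the paper needs, and you correctly secure it first. So your route is a genuine simplification of the paper's key lemma while reaching the same conclusion.
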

\begin{proof}
Let $x_0 \in X$ be arbitrary but fixed and consider the sequence of iterates $\{x_n\}$ where $x_n = Tx_{n-1}$ for all $n \geq 1$.

If $x_n = x_{n+1}$ for some natural number $n$, then it is easily noticeable that $T$ has a fixed point, the fixed point is unique and the sequence $\{x_n\}$ converges to that fixed point.

So now we presume that $x_n \neq x_{n+1}$ for all natural numbers $n$. Then arguing as in previous theorem we can show that all terms of {$\{x_n\}$} are distinct.

Now for any $n\in \mathbb{N}$ we have,
\begin{align*}
d(x_n,x_{n+1}) &< d(x_{n-1},x_n) \\
&<  d(x_{n-2},x_{n-1}) \\
& \hdots\\
&\hdots\\
&< d(x_0,x_1).
\end{align*}
Let $k,n \in \mathbb{N} $ be arbitrary but fixed. First suppose that $k\geq v$. Then by hypothesis we get a real number $M> 0$ such that $d(x_0,x_{k-v}) \leq M$. Therefore,
\begin{align*}
d(x_n,x_{n+k})&\leq  s\{d(x_n,x_{n+1}) + d(x_{n+1},x_{n+2}) + \hdots + d(x_{n+v},x_{n+k})\} \\
& <  s \{ d(x_0,x_1) + d(x_0,x_1) + \hdots + d(x_{n+v-1},x_{n+k-1})\} \\
& \hdots \\
& \hdots \\
& <  s \{ v d(x_0,x_1) + d(x_0,x_{k-v})\} \\
& <  s \{ v d(x_0,x_1) + M\}\\
& =  M_1, \mbox{  say }.
\end{align*}
Now suppose that $k<v$. Then we have
\begin{align} \label{eqn2}
d(x_n,x_{n+k})&\leq  s\{d(x_n,x_{n+1}) + d(x_{n+1},x_{n+2}) + \cdots + d(x_{n+v-1},x_{n+v})\nonumber \\ & + d(x_{n+v},x_{n+k})\} \nonumber \\
&< s\{d(x_0,x_1) + d(x_0,x_1) + \cdots + d(x_0,x_1) + d(x_{n+v-1},x_{n+k-1})\} \nonumber \\
& \hdots \nonumber \\
&\hdots \nonumber \\
&< s\{d(x_0,x_1) + d(x_0,x_1) + \cdots + d(x_0,x_1)+ d(x_v,x_k)\}.
\end{align}
Let $M_2 = \displaystyle \max_{k<v} \{d(x_v,x_k)\}$. Then clearly $M_2$ is finite. Thus by using Equation \ref{eqn2}, we get
$$ d(x_n,x_{n+k}) < s \{ v d(x_0,x_1) + M_2\} = M_3, \mbox{  say  }.$$
Therefore, from above discussions we see that 
$$ d(x_n,x_{n+k}) < \displaystyle \max \{M_1,M_3\}$$ for all $n,k\in \mathbb{N}$, which shows that the sequence $\{x_n\}$ is bounded. So by boundedly compactness of $X$, $\{x_n\}$ has a convergent subsequence, say, $\{x_{n_k}\}$ and also let $\displaystyle \lim_{k\to \infty} x_{n_k} = z.$ Then proceeding as Theorem \ref{th1} we can show that $z$ is the unique fixed point of $T$ and the sequence of iterates $\{x_n\}$ converges to $z$.
\end{proof}
As a consequence of Theorem \ref{th2} we obtain the following corollary.
\begin{corollary}
Let $(X,d)$ be a boundedly compact $b_v(s)$-metric space and $T:X\to X$ be a bounded mapping such that $$d(Tx,Ty) < d(x,y)$$ for all $x,y \in X$ with $x \neq y$. Then $T$ has a  unique fixed point and for any $x\in X$ the sequence  $\{T^nx\}$ converges to that fixed point.
\end{corollary}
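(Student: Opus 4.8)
The plan is to derive the corollary from Theorem~\ref{th2} by checking that a bounded mapping automatically satisfies the orbital boundedness hypothesis used there: for every $x\in X$ there is a single $M>0$, depending only on $x$, with $d(x,T^{k-v}x)\le M$ for all $k\in\mathbb N$ with $k\ge v$, equivalently $\sup_{j\ge 0}d(x,T^jx)<\infty$. Since $T$ is a bounded mapping, its range $T(X)$ is a bounded set, so fix $M_0>0$ with $d(p,q)\le M_0$ for all $p,q\in T(X)$.

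First I would handle the degenerate case. Fix $x\in X$ and consider the orbit $O(x)=\{T^jx:j\ge 0\}$. If $T^nx=T^mx$ for some $n<m$, then the sequence of iterates is eventually periodic, so $O(x)$ is finite; then $\{d(x,T^jx):j\ge 0\}$ is a finite set of nonnegative reals and any $M$ exceeding its maximum (and $0$) works. Hence I may assume all iterates $T^jx$ are pairwise distinct, so $O(x)$ is infinite and every $T^jx$ with $j\ge 1$ lies in $T(X)$.

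The main step is this all-distinct case. For $j=0$ there is nothing to prove, and for $j=1,\dots,v$ the number $d(x,T^jx)$ is one of finitely many fixed values. For $j>v$ I would apply axiom~(iii) of the $b_v(s)$-metric with the $v$ intermediate points $u_i=T^ix$, $i=1,\dots,v$, and endpoints $x$, $T^jx$: these $u_i$ are pairwise distinct, distinct from $x$, and (as $j>v$) distinct from $T^jx$, whence
\[
d(x,T^jx)\le s\Big(d(x,Tx)+\sum_{i=1}^{v-1}d(T^ix,T^{i+1}x)+d(T^vx,T^jx)\Big).
\]
The first $v$ summands on the right are fixed finite numbers independent of $j$, and $d(T^vx,T^jx)\le M_0$ because $T^vx,T^jx\in T(X)$. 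Thus $d(x,T^jx)\le s\big(\sum_{i=0}^{v-1}d(T^ix,T^{i+1}x)+M_0\big)$ for all $j>v$, and taking $M$ to be the maximum of this constant and the finitely many numbers $d(x,Tx),\dots,d(x,T^vx)$ (and $0$) gives the required bound $d(x,T^{k-v}x)\le M$ for all $k\ge v$, with $M$ depending only on $x$.

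Having verified the hypothesis of Theorem~\ref{th2} for every $x\in X$, I would invoke Theorem~\ref{th2} directly to obtain the unique fixed point and the convergence of $\{T^nx\}$ to it. The only point requiring care — the ``main obstacle'' — is making the estimate from axiom~(iii) uniform in $j$: one must pin down \emph{fixed} intermediate points (the first $v$ iterates) so that $v$ of the $v+1$ distances on the right do not vary with $j$, and then absorb the single remaining distance $d(T^vx,T^jx)$ into the diameter bound $M_0$; the separate bookkeeping for the small indices $j\le v$ and for finite orbits is precisely what makes this choice of intermediate points legitimate.
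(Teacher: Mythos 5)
Your proposal is correct and follows the same route the paper intends: the paper states this corollary as an immediate consequence of Theorem~\ref{th2} without writing out the verification, and you supply exactly that verification, showing that boundedness of $T(X)$ forces $\sup_{j\ge 0} d(x,T^jx)<\infty$ by routing the $b_v(s)$-inequality through the fixed intermediate points $Tx,\dots,T^vx$ and handling the finite-orbit and small-index cases separately. The details (distinctness of the intermediate points, absorbing $d(T^vx,T^jx)$ into the diameter of $T(X)$) are handled correctly, so nothing is missing.
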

Taking $s=1$ and $v=1$ in Theorem \ref{th2}, we get the following corollary.
\begin{corollary}
Let $(X,d)$ be a boundedly compact metric space and $T:X\to X$ be a mapping such that $$d(Tx,Ty)<d(x,y)$$ for all $x,y \in X$ with $x\neq y$. Also assume that, for any $x \in X$ and for any $k\in \mathbb{N}$ with $k\geq 1$ there exists a real number $M>0$ (depending on $x$) such that $d(x,T^{k-1}x) \leq M$. Then $T$ has a unique fixed point $z$ and for any $x\in X$ the sequence $\{T^nx\}$ converges to $z$.
\end{corollary}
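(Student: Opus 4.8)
The plan is to obtain this corollary as the special case $s=1$, $v=1$ of Theorem~\ref{th2}. The first step is the purely formal observation that a $b_1(1)$-metric space is nothing but an ordinary metric space: conditions (i) and (ii) in the definition of a $b_v(s)$-metric space are exactly the positivity/identity and symmetry axioms of a metric, and condition (iii) with $v=1$ and $s=1$ reads $d(x,y)\le d(x,u_1)+d(u_1,y)$ for every $u_1\in X$ distinct from $x$ and $y$; the remaining instances of the triangle inequality, namely $u_1=x$ or $u_1=y$, are immediate from (i). Conversely every metric space trivially satisfies (i)--(iii) with $v=1$, $s=1$. Hence ``boundedly compact $b_1(1)$-metric space'' means precisely ``boundedly compact metric space'' in the sense used here.

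The second step is to check that the extra hypothesis is the right specialization. With $v=1$ the condition in Theorem~\ref{th2} — ``for every $x\in X$ and every $k\in\mathbb{N}$ with $k\ge v$ there is $M>0$ (depending on $x$) with $d(x,T^{k-v}x)\le M$'' — becomes verbatim the hypothesis stated in the corollary, since $k-v=k-1$. Therefore all hypotheses of Theorem~\ref{th2} are met, and its conclusion gives at once that $T$ has a unique fixed point $z$ and that $\{T^nx\}$ converges to $z$ for every $x\in X$. This is the entire proof; no new work is required.

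For completeness one could instead give a self-contained argument that simply repeats, in this simpler setting, the steps of Theorems~\ref{th1} and~\ref{th2}: fix $x_0$, put $x_n=T^nx_0$; either some $x_n=x_{n+1}$ (and then $x_n$ is the fixed point) or, as in Theorem~\ref{th1}, the terms $x_n$ are pairwise distinct; the chain $d(x_n,x_{n+1})<d(x_{n-1},x_n)<\cdots<d(x_0,x_1)$ together with the triangle inequality and the bound $d(x_0,x_{k-1})\le M$ shows $\{x_n\}$ is bounded; bounded compactness yields $x_{n_k}\to z$; contractivity forces $Tz=z$ after splitting into the cases where $x_{n_k}$ equals $z$ or $Tz$ for infinitely or only finitely many $k$; uniqueness follows from $d(Tz,Tz_1)<d(z,z_1)$; and finally $s_n=d(x_n,z)$ is non-negative and strictly decreasing, hence convergent, while $0$ is one of its cluster points (along $n_k$), so $s_n\to0$.

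I do not expect a real obstacle here. The only point needing care is the bookkeeping identification $b_1(1)=\text{metric}$ and, inside the invoked theorems, the fact that limits of sequences in a $b_v(s)$-metric space need not be unique a priori; but that subtlety is already absorbed by the cluster-point argument in the proof of Theorem~\ref{th1}, which Theorem~\ref{th2} inherits, so nothing extra must be verified at this level.
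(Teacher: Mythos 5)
Your proposal is correct and matches the paper exactly: the corollary is obtained by taking $s=1$ and $v=1$ in Theorem~\ref{th2}, after noting that a $b_1(1)$-metric space is just a metric space. The additional self-contained sketch you include is consistent with the proofs of Theorems~\ref{th1} and~\ref{th2} but is not needed.
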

Now we cite the following example which supports the above theorem.
\begin{example} \label{ex1}
Consider the set $X=\{\frac{1}{n}: n\in \mathbb{N}, n\geq 2\}$. We define a function $d:X\times X \to \mathbb{R}$ by
$$d(x,y)=\begin{cases}
|n-m|, ~~\text{if}~~ |n-m| \neq 1;\\
\frac{1}{2} ~~\text{if}~~ |n-m| =1.
\end{cases}$$
Then, it is an easy task to verify that $(X,d)$ is a $b_3(3)$-metric space. Also, it is simply noticeable that $(X,d)$ is  boundedly compact but not sequentially compact.

Now, we define a mapping $T:X \to X$ by
$$Tx= \frac{1}{4}, ~~\mbox{for all}~~ x\in X.$$
Then, clearly $$d(Tx,Ty) < d(x,y)$$ for all $x,y \in X$ with $x\neq y$.

Also, for any $x\in X$ and for any $k\in \mathbb{N}$ with $k \geq 3$, there exists a real number $M>0$ (depending on $x$) such that $d(x,T^{k-3}x)<M$ (here $M=x$).
Then by Theorem \ref{th2}, $T$ has a unique fixed point. Note that $x=\frac{1}{4}$ is the unique fixed point of $T$.
\end{example}

From the definitions of boundedly compactness and completeness of $b_v(s)$ metric space, we see that every boundedly compact $b_v(s)$-metric space is complete. Thus Example \ref{e1} also shows that if $(X,d)$ is a complete $b_v(s)$-complete metric space and $T:X\to X$ is a mapping such that $$d(Tx,Ty)<d(x,y)$$ for all $x,y \in$ with $x\neq y$, then $T$ may not have a fixed point. So we also need some additional assumption with the completeness of $X$ to warrant the fixed point of $T$.
\newpage
Now we consider the following example.
\begin{example}
Let $\{x_n^i\}$ be a sequence whose $i$-th term is $1$ and all other terms are $0$ and consider the set $$X=\left\{ \{x_n^i\} : i \in \mathbb{N} \right\}.$$
Now, we define a function $d:X \times X \to \mathbb{R}$ by
$$ d\left(\{x_n^i\},\{x_n^j\}\right)=
\begin{cases}
0, \text{ if}~~ i=j;\\
1 + \frac{100}{ \displaystyle \sum_{n=1}^{\infty}|i x_n^i - j x_n^j|} \text{ if } ~~ i,j \leq 10;\\
1 + \frac{10}{ \displaystyle \sum_{n=1}^{\infty}|i x_n^i - j x_n^j|} \text{ if } ~~ \mbox{either one of $i$ or $j$} >10.
\end{cases}$$
Then it is trivial to check that $(X,d)$ is a $b_2(10)$-metric space. Also,  $(X,d)$ is not boundedly compact but complete.

Define, $T: X \to X$ by $$ T\left(\{x_n^i\}\right) = \{x_n^{i+11}\} $$ for all $\{x_n^i\} \in X$.

Let, $\{x_n^i\}, \{x_n^j\} \in X$ be arbitrary with $i=j$, then the following three cases may arise:

Case I: $i,j \leq 10$, then
\begin{align*}
d\left(T \{x_n^i\},T \{x_n^j\}\right) &= d\left( \{x_n^{i+11}\}, \{x_n^{j+11}\}\right)\\
&= \frac{10}{i+j+22}+1\\
&< \frac{100}{i+j}+1\\
&= d\left(\{x_n^i\},\{x_n^j\}\right).
\end{align*}
Case II: $i,j> 10$, then
\begin{align*}
d\left(T \{x_n^i\},T \{x_n^j\}\right) &= d\left( \{x_n^{i+11}\}, \{x_n^{j+11}\}\right)\\
&= \frac{10}{i+j+22}+1\\
&< \frac{10}{i+j}+1\\
&= d\left(\{x_n^i\},\{x_n^j\}\right).
\end{align*}
Case III: any one of $i$ and $j >10$, then
\begin{align*}
d\left(T \{x_n^i\},T \{x_n^j\}\right) &= d\left( \{x_n^{i+11}\}, \{x_n^{j+11}\}\right)\\
&= \frac{10}{i+j+22}+1\\
&< \frac{10}{i+j}+1\\
&= d\left(\{x_n^i\},\{x_n^j\}\right).
\end{align*}
Thus, $$d(Tx,Ty)< d(x,y)$$ for all $x,y \in X$ with $x\neq y$.

Note that for any $x \in X$ and for any $k\in \mathbb{N}$ with $k\geq v$(here $v=2$) there exists a real number $M>0$ (here we may take $M=200$) such that $d(x,T^{k-v}x) \leq M$ but still $T$ has no fixed point.
\end{example}
Thus we see that the additional condition which is considered in Theorem \ref{th2} together with the completeness of $X$ does not deliver fixed point of $T$. It means that we have to find out some different additional condition with the completeness of $X$ so as to ensure the existence of  fixed point of $T$. In the following theorem we consider such an addition condition in case of $b_v(1)$-metric spaces which is firstly given by Suzuki \cite{S2}. 
\begin{theorem} \label{th3}
Let $(X,d)$ be a complete $b_v(1)$-metric space and $T:X\to X$ be a mapping such that $$d(Tx,Ty) < d(x,y)$$ for all $x,y \in X$ with $x \neq y$. Also assume that for any $x\in X$ and for any $\epsilon >0$, there exists $\delta >0$ such that $$d(T^ix,T^jx)< \epsilon + \delta  \Rightarrow d(T^{i+1}x,T^{j+1}x)\leq \epsilon$$ for any $i,j \in \mathbb{N} \cup \{0\}$. Then $T$ has a unique fixed point and for any $x\in X$ the sequence  $\{T^nx\}$ converges to that fixed point.
\end{theorem}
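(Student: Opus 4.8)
The plan is to adapt Suzuki's original argument for metric spaces to the $b_v(1)$ setting, where the only structural change is that the triangle inequality is replaced by a $v$-step polygonal inequality. Fix $x\in X$ and write $x_n=T^nx$. As in Theorems \ref{th1} and \ref{th2}, I would first dispose of the trivial case $x_n=x_{n+1}$ for some $n$, and otherwise argue (exactly as before) that all the $x_n$ are distinct. The first substantive step is to show that $\{x_n\}$ is a Cauchy sequence. Set $c_n=d(x_n,x_{n+1})$; by the contractive condition $\{c_n\}$ is nonincreasing, so it converges to some $r\ge 0$, and I claim $r=0$. Suppose $r>0$. Apply hypothesis $(B)$ with $\epsilon=r$ to obtain $\delta>0$; since $c_n\to r$ there is $N$ with $c_N=d(x_N,x_{N+1})<r+\delta$, hence $d(x_{N+1},x_{N+2})\le r$. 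But $d(x_{N+1},x_{N+2})<d(x_N,x_{N+1})$ and more care is needed — instead, the clean route is: pick $N$ with $c_N<r+\delta$; then $d(x_{N+1},x_{N+2})\le r$, contradicting $d(x_{N+1},x_{N+2})\ge r$ only if equality fails. Actually the correct Suzuki trick is to use $(B)$ to get, for the given $r>0$ and its $\delta$, that eventually $d(x_n,x_{n+1})<r+\delta$, so $d(x_{n+1},x_{n+2})\le r$ for all large $n$, forcing $r=\lim c_n\le r$ with the strict inequality $c_{n+1}<c_n$ giving $c_m<r$ for some $m$, a contradiction. So $r=0$, i.e. $d(x_n,x_{n+1})\to 0$.

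Next, the Cauchy estimate. In a $b_v(1)$-metric space one controls $d(x_n,x_{n+p})$ for $p\ge 1$ by chaining: for $p> v$ one writes $d(x_n,x_{n+p})\le d(x_n,x_{n+1})+\dots+d(x_{n+v-1},x_{n+v})+d(x_{n+v},x_{n+p})$ and wants to iterate, but the last term does not shrink in index the way a single triangle step would, so the honest approach follows Suzuki: fix $\epsilon>0$, get $\delta>0$ from $(B)$ (we may take $\delta<\epsilon$), and choose $N$ so large that $d(x_n,x_{n+1})<\delta/(v)$ — more precisely small enough that any $v$ consecutive such steps sum to less than $\delta$ — for $n\ge N$. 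The key claim is then: if $i,j\ge N$ and $d(x_i,x_j)<\epsilon+\delta$ then $d(x_{i+1},x_{j+1})\le\epsilon$ (this is exactly $(B)$), and conversely from $d(x_i,x_j)\le\epsilon$ one deduces $d(x_{i+1},x_{j+1})<\epsilon+\delta$ using the $b_v(1)$-inequality $d(x_{i+1},x_{j+1})\le d(x_{i+1},x_i)+\dots$ — here I would insert $v$ suitable intermediate points between $x_{i+1}$ and $x_{j+1}$ drawn from the tail of the sequence (legitimate since all terms are distinct and the tail is infinite) so that the "rectangular'' slack is absorbed into $\delta$. Running this see-saw shows $d(x_i,x_j)\le\epsilon$ for all $i,j\ge$ some $N'$, hence $\{x_n\}$ is Cauchy.

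By completeness $x_n\to z$ for some $z\in X$. To see $Tz=z$: if $x_n=z$ for infinitely many $n$ then $x_{n+1}=Tz$ infinitely often, so a subsequence converges to $Tz$; since the whole sequence converges to $z$ and limits in a $b_v(s)$-metric space are unique (which follows from the defining inequality once we pick $v$ auxiliary tail points), $Tz=z$. Otherwise $x_n\ne z$ for all large $n$, and we may also assume $x_n\ne Tz$ for all large $n$ (handling the "$x_n=Tz$ infinitely often'' case as in Theorem \ref{th1}); then, choosing distinct intermediate points $x_{n+1},\dots,x_{n+v-1}$ and using the contractive inequality on the final segment exactly as in \eqref{eqn1},
\[
d(z,Tz)\le d(z,x_{n+1})+d(x_{n+1},x_{n+2})+\dots+d(x_{n+v-1},x_{n+v})+d(x_{n+v},Tz),
\]
and bounding $d(x_{n+v},Tz)=d(Tx_{n+v-1},Tz)<d(x_{n+v-1},z)$, then letting $n\to\infty$ and using $d(x_n,z)\to 0$ and $d(x_n,x_{n+1})\to 0$, gives $d(z,Tz)=0$. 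Uniqueness is immediate: a second fixed point $z_1$ would give $d(z,z_1)=d(Tz,Tz_1)<d(z,z_1)$. Finally, the same argument applied to an arbitrary starting point shows $\{T^nx\}$ converges to $z$ for every $x$, once one knows $z$ is the \emph{only} fixed point.

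\textbf{Main obstacle.} The delicate point is the Cauchy argument: in a genuine $b_v(1)$-metric space there is no "reduce the index by one at a time'' triangle inequality, so one cannot telescope naively, and one must instead run Suzuki's $\epsilon$–$\delta$ see-saw while repeatedly invoking the $v$-point inequality with carefully chosen auxiliary points from the (infinite, all-distinct) tail of the orbit; keeping track that the inserted points are genuinely distinct from one another and from the endpoints, and that the accumulated "rectangular'' error stays below $\delta$, is the part that needs real care. A secondary point is establishing uniqueness of limits in $b_v(s)$-metric spaces, which the paper uses implicitly and which again requires feeding $v$ tail points into the defining inequality.
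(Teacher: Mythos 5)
Your overall plan coincides with the paper's: show $d(x_n,x_{n+1})\to 0$ via the monotone sequence $s_n$ and condition $(B)$, run a Suzuki-type $\epsilon$--$\delta$ argument with auxiliary tail points to get Cauchyness, invoke completeness, and then finish as in Theorem \ref{th1}. However, the step you yourself flag as delicate — the Cauchy estimate — is genuinely missing, and the ``see-saw'' you describe does not do the job as stated. Your two half-steps are: (a) $d(x_i,x_j)<\epsilon+\delta\Rightarrow d(x_{i+1},x_{j+1})\le\epsilon$ (condition $(B)$), and (b) $d(x_i,x_j)\le\epsilon\Rightarrow d(x_{i+1},x_{j+1})<\epsilon+\delta$. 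Both of these move along the diagonal $(i,j)\mapsto(i+1,j+1)$ and never change the gap $j-i$; iterating them only propagates a bound you already have for one pair of indices to all later pairs with the \emph{same} gap, so it cannot produce ``$d(x_i,x_j)\le\epsilon$ for all $i,j\ge N'$'' starting from bounds on gaps $1$ and $2$ alone. What is actually needed — and what the paper supplies — is an induction on the gap $j$: assuming $d(x_{n+2v+1},x_{n+2v+j})\le\epsilon/2$, one first pushes this forward $v$ steps with the contraction to get \eqref{eqq5}, then applies the $v$-point inequality \emph{twice} (inserting $x_{n+2v+2},\dots,x_{n+3v+1}$ between $x_{n+2v}$ and $x_{n+2v+j}$, and then $x_{n+2v+j+1},\dots,x_{n+3v+j}$ between $x_{n+2v+j}$ and $x_{n+3v+1}$) so that $d(x_{n+2v},x_{n+2v+j})$ is bounded by $2v$ small steps plus the propagated $\epsilon/2$, i.e.\ by $\epsilon/2+\delta$; condition $(B)$ then yields $d(x_{n+2v+1},x_{n+2v+j+1})\le\epsilon/2$, which is the bound at gap $j+1$ (at shifted indices). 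That bootstrapping mechanism is the whole content of the theorem and is absent from your proposal.

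Two smaller points. First, the chain above must begin with the two-step term $d(x_{n+2v},x_{n+2v+2})$, because for $j=1$ the point $x_{n+2v+1}$ is an endpoint and cannot be reused as an intermediate point; this is why the paper separately proves $\lim_n d(x_n,x_{n+2})=0$ (Equation \eqref{eqq2}) and calibrates both one-step and two-step distances to be $<\delta/(2v)$. Your proposal never establishes the two-step limit. Second, your argument that $s_n=d(x_n,x_{n+1})\to 0$ contains several false starts; the clean version is: if $s_n\downarrow a>0$, pick $\delta$ from $(B)$ for $\epsilon=a$, find $n$ with $s_n<a+\delta$, conclude $s_{n+1}\le a$, hence $s_{n+1}=a$ (since $s_m\ge a$ for all $m$), and then $s_{n+2}<s_{n+1}=a$ contradicts $s_{n+2}\ge a$. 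The concluding portions of your proposal (fixed point of the limit, uniqueness, convergence of all orbits) do match the paper's route through Theorem \ref{th1}.
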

\begin{proof}
Choose an arbitrary but fixed element $x_0 \in X$, and consider the sequence $\{x_n\}$ in $X$, where $x_n=T^nx_0$ for all natural numbers $n$.

If $x_n = x_{n+1}$ for some natural number $n$, then it is easy to check that $x_n$ is the unique fixed point of $T$. So, now we assume that no two consecutive terms of $\{x_n\}$ are equal, i.e., $x_n \neq x_{n+1}$ for every natural number $n$.

Under this assumption, proceeding as Theorem \ref{th1}, we can show that all terms of $\{x_n\}$ are distinct.

Consider the sequence of real numbers $\{s_n\}$ where $s_n=d(x_n,x_{n+1})$ for all $n\in \mathbb{N}$. Then,
\begin{align*}
s_{n+1}&= d(x_{n+1},x_{n+2})\\
&< d(x_n,x_{n+1})\\
&= s_n
\end{align*}
for all $n \in \mathbb{N}$.

Therefore, $\{s_n\}$ is a decreasing sequence of non-negative real numbers and hence convergent to some $a\geq 0$. If $a>0$, then by given condition there exists $\delta>0$ such that 
$$d(x_n,x_{n+1})<a+\delta ~~\implies ~~ d(x_{n+1},x_{n+2})\leq a$$ for all $n \in \mathbb{N}$.

Again by definition of $a$, for the above $\delta>0$, there exists $n\in \mathbb{N}$ such that
$$d(x_n,x_{n+1})<a+\delta.$$

Therefore, $d(x_{n+1},x_{n+2})\leq a$ and this leads to a contradiction. so we must have $a=0$, i.e.,
\begin{equation} \label{eqq1}
\displaystyle \lim_{n \to \infty} d(x_n,x_{n+1})=0.
\end{equation} 
In a similar manner, we can show that 
\begin{equation} \label{eqq2}
\displaystyle \lim_{n \to \infty} d(x_n,x_{n+2})=0.
\end{equation}
Now let, $\epsilon >0$ be arbitrary. Then by given condition we can obtain a $\delta>0$ such that
$$d(x_n,x_{n+1})<\frac{\epsilon}{2}+\delta ~~\mbox{implies} ~~ d(x_{n+1},x_{n+2})\leq \frac{\epsilon}{2}$$ for all $n \in \mathbb{N}$.

Also by Equations \ref{eqq1} and \ref{eqq2}, for the above $\delta>0$ there exists a natural number $N$ such that
\begin{align}
d(x_n,x_{n+1}) &< \frac{\delta}{2v}  \label{eqq3} \\
d(x_n,x_{n+2}) &< \frac{\delta}{2v} \label{eqq4}
\end{align}
for all $n\geq N$.

Now we show by induction on $j$ that $d(x_{n+2v+1},x_{n+2v+j}) \leq \frac{\epsilon}{2}$ for all $j\in \mathbb{N}$ and for all $n \geq N$.

The result is obviously true for $j=1$. Assume that, the result is true for some $j\in \mathbb{N}$. So,
$d(x_{n+2v+1},x_{n+2v+j}) \leq \frac{\epsilon}{2}$ which implies
\begin{equation} \label{eqq5}
d(x_{n+3v+1},x_{n+3v+j}) \leq \frac{\epsilon}{2}.
\end{equation}

Now,
\begin{align}
d(x_{n+2v},x_{n+2v+j}) & \leq  \{d(x_{n+2v},x_{n+2v+2})+d(x_{n+2v+2},x_{n+2v+3}) +d(x_{n+2v+3},x_{n+2v+4})\nonumber \\    
& + \hdots +d(x_{n+2v+v},x_{n+2v+v+1})
+d(x_{n+2v+v+1},x_{n+2v+j}) \}\nonumber \\ \nonumber
&= \{d(x_{n+2v},x_{n+2v+2})+d(x_{n+2v+2},x_{n+2v+3}) +d(x_{n+2v+3},x_{n+2v+4})\\ \nonumber     
& + \hdots +d(x_{n+3v},x_{n+3v+1})\}+d(x_{n+2v+j},x_{n+3v+1}) \\ \nonumber
&\leq   \{d(x_{n+2v},x_{n+2v+2})+d(x_{n+2v+2},x_{n+2v+3}) +d(x_{n+2v+3},x_{n+2v+4})\\ \nonumber    
& + \dots +d(x_{n+3v},x_{n+3v+1})\} + \{d(x_{n+2v+j},x_{n+2v+j+1})  \nonumber\\
&  +(x_{n+2v+j+1},x_{n+2v+j+2}) + \dots +d(x_{n+2v+j+v-1},x_{n+2v+j+v}) \nonumber \\
& + d(x_{n+2v+j+v},x_{n+3v+1}) \} \nonumber\\ \nonumber
& = \{d(x_{n+2v},x_{n+2v+2})+d(x_{n+2v+2},x_{n+2v+3}) +d(x_{n+2v+3},x_{n+2v+4}) \\  \nonumber   
& + \dots +d(x_{n+3v},x_{n+3v+1})\} + \{d(x_{n+2v+j},x_{n+2v+j+1}) \\
& + d(x_{n+2v+j+1},x_{n+2v+j+2}) + \dots +d(x_{n+3v+j-1},x_{n+3v+j})\} \nonumber \\
& + d(x_{n+3v+j},x_{n+3v+1}).  \label{eqq6}
\end{align}
Using Equations \ref{eqq3}, \ref{eqq4} and \ref{eqq5} in equation \ref{eqq6} we get
\begin{align*}
d(x_{n+2v},x_{n+2v+j}) &< \left( \frac{\delta}{2v}+\frac{\delta}{2v} + \cdots +\frac{\delta}{2v}\right) + \left( \frac{\delta}{2v}+\frac{\delta}{2v} + \cdots +\frac{\delta}{2v}\right ) + \frac{\epsilon}{2}\\
\Rightarrow d(x_{n+2v},x_{n+2v+j}) &< \frac{\epsilon}{2} + \delta
\end{align*}
for all $n \geq N$. This implies that $$d(x_{n+2v+1},x_{n+2v+j+1}) \leq \frac{\epsilon}{2} $$ for all $n \geq N$. Thus by induction it follows that 
$$d(x_{n+2v+1},x_{n+2v+j})\leq \frac{\epsilon}{2} < \epsilon$$ for all $n \geq N$ and for all $j \in \mathbb{N}$, which proves that $\{x_n\}$ is a Cauchy sequence. So, by the completeness of $X$, $\{x_n\}$ is convergent to some $z\in X$.

Now continuing as Theorem \ref{th1}, we can prove that $z$ is the only fixed point of $T$.

Since, we choose $x_0 \in X$ arbitrarily, it follows that $\{T^nx\}$ converges to the unique fixed point $z$ for all $x\in X$.

\end{proof}
\begin{open question}
Let $(X,d)$ be a complete $b_v(s)$-metric space and $T$ be a self-mapping on $X$ such that $$d(Tx,Ty)<d(x,y)$$ for all $x,y \in X$ with $x\neq y$. If $s>1$, then find out a weaker additional assumption on $T$ which will ensure that $T$ has a fixed point.
\end{open question}

From Theorem \ref{th3} we can derive several important corollaries. We present a number of selected ones which extend several well-known results in the literature.
\begin{corollary} [Theorem $5$, \cite{S2}]
Let $(X,d)$ be a complete metric space and $T:X\to X$ be a mapping such that $$d(Tx,Ty) < d(x,y)$$ for all $x,y \in X$ with $x \neq y$. Also assume that for any $x\in X$ and for any $\epsilon >0$, there exists $\delta >0$ such that $$d(T^ix,T^jx)< \epsilon + \delta  \Rightarrow d(T^{i+1}x,T^{j+1}x)\leq \epsilon$$ for any $i,j \in \mathbb{N} \cup \{0\}$. Then $T$ has a unique fixed point and for any $x\in X$ the sequence  $\{T^nx\}$ converges to that fixed point.
\end{corollary}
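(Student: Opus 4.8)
The plan is to observe that this corollary is the special case $s = 1$, $v = 1$ of Theorem \ref{th3}, so essentially nothing needs to be proved beyond identifying the structures. First I would note that if $(X,d)$ is an ordinary metric space, then $(X,d)$ is a $b_1(1)$-metric space: conditions (i) and (ii) in the definition of a $b_v(s)$-metric space are precisely the positivity, identity of indiscernibles, and symmetry axioms of a metric, while condition (iii) with $v = 1$ and $s = 1$ reads $d(x,y) \le d(x,u_1) + d(u_1,y)$ for every $u_1 \in X$ distinct from $x$ and $y$, which is immediate from the triangle inequality of $d$. Hence completeness of $(X,d)$ as a metric space coincides with completeness as a $b_1(1)$-metric space, since the notion of Cauchy and convergent sequence in Definition (the one attributed to \cite{MR}) reduces to the usual one when $s = 1$, $v = 1$.

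Next I would check that the hypotheses on $T$ in the corollary are verbatim the hypotheses of Theorem \ref{th3} with $v = 1$: the contractivity $d(Tx,Ty) < d(x,y)$ for $x \neq y$ is unchanged, and the additional assumption ``for any $x \in X$ and any $\epsilon > 0$ there is $\delta > 0$ with $d(T^i x, T^j x) < \epsilon + \delta \Rightarrow d(T^{i+1}x, T^{j+1}x) \le \epsilon$ for all $i,j \in \mathbb{N} \cup \{0\}$'' is exactly condition $(B)$ appearing there. Therefore all the hypotheses of Theorem \ref{th3} are satisfied, and applying it yields that $T$ has a unique fixed point $z$ and that $\{T^n x\}$ converges to $z$ for every $x \in X$, which is the assertion of the corollary.

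I do not anticipate any genuine obstacle here; the only point requiring a line of verification is that a metric space qualifies as a $b_1(1)$-metric space, and in particular that condition (iii) is demanded only for $u_1$ distinct from $x$ and $y$ (which is no stronger than the usual triangle inequality). Once that identification is made, the result follows directly from Theorem \ref{th3}, and one may simply remark that it recovers Theorem $5$ of \cite{S2}.
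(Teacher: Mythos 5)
Your proposal is correct and matches the paper's (implicit) derivation exactly: the corollary is obtained by specializing Theorem \ref{th3} to the case $v=1$, after observing that a complete metric space is a complete $b_1(1)$-metric space and that the hypotheses on $T$ are verbatim those of the theorem. No further argument is needed.
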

\begin{corollary}
Let $(X,d)$ be a complete rectangular metric space and $T:X\to X$ be a mapping such that $$d(Tx,Ty) < d(x,y)$$ for all $x,y \in X$ with $x \neq y$. Also assume that for any $x\in X$ and for any $\epsilon >0$, there exists $\delta >0$ such that $$d(T^ix,T^jx)< \epsilon + \delta  \Rightarrow d(T^{i+1}x,T^{j+1}x)\leq \epsilon$$ for any $i,j \in \mathbb{N} \cup \{0\}$. Then $T$ has a unique fixed point and for any $x\in X$ the sequence  $\{T^nx\}$ converges to that fixed point.
\end{corollary}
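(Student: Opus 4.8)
The key observation is that a rectangular (Branciari generalized) metric space is exactly a $b_2(1)$-metric space: the defining quadrilateral inequality $d(x,y) \le d(x,u_1) + d(u_1,u_2) + d(u_2,y)$ for all distinct auxiliary points $u_1, u_2$ different from $x$ and $y$ is precisely condition (iii) in the definition of a $b_v(s)$-metric space with $v = 2$ and $s = 1$, while conditions (i) and (ii) (non-negativity, identity of indiscernibles, symmetry) coincide with the corresponding axioms of a rectangular metric. Hence every complete rectangular metric space is a complete $b_2(1)$-metric space.

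The plan is therefore to invoke Theorem \ref{th3} directly. First I would verify the identification above — essentially a one-line check that the two sets of axioms match when $v=2$, $s=1$. Next, given the hypotheses of the corollary, I would note that the contractive condition $d(Tx,Ty) < d(x,y)$ for $x \neq y$ and the additional assumption that for every $x \in X$ and every $\epsilon > 0$ there is $\delta > 0$ with $d(T^i x, T^j x) < \epsilon + \delta \Rightarrow d(T^{i+1}x, T^{j+1}x) \le \epsilon$ are word-for-word the hypotheses of Theorem \ref{th3} in the case $v = 2$. Applying that theorem yields a unique fixed point $z$ of $T$ and the convergence $\{T^n x\} \to z$ for every $x \in X$, which is exactly the conclusion sought.

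I do not anticipate a genuine obstacle here, since the corollary is a specialization of an already-established result; the only point requiring care is making the axiom-matching between "rectangular metric space" and "$b_2(1)$-metric space" fully explicit, so that the hypothesis that $X$ is complete as a rectangular metric space transfers to completeness as a $b_2(1)$-metric space (which it does verbatim, since the notions of Cauchy sequence and convergence depend only on the values of $d$, not on which inequality it satisfies).
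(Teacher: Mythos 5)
Your proposal is correct and is exactly how the paper obtains this corollary: the text introduces it as one of the consequences "derived from Theorem \ref{th3}," implicitly using the identification of a rectangular (Branciari) metric space with a $b_2(1)$-metric space and specializing $v=2$, $s=1$. Your explicit attention to transferring the completeness hypothesis is a welcome bit of extra care, but there is no substantive difference in approach.
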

\vskip.5cm\noindent{\bf Acknowledgements:}\\
The first named author would like to express his special thanks of gratitude to  CSIR, New Delhi, India for their financial supports. 
\bibliographystyle{plain}

\begin{thebibliography}{10}

\bibitem{ADKR}
T.V. An, N.V. Dung, Z.~Kadelburg, and S.~Radenovi\'{c}.
\newblock Various generalizations of metric spaces and fixed point theorems.
\newblock {\em Rev. R. Acad. Cienc. Exactas Fis. Nat. Ser. A Math. RACSAM},
  109(1):175--198, 2015.

\bibitem{AKL}
H.~Aydii, E.~Karapinar, and H.~Lakzian.
\newblock Fixed point results on a class of generalized metric spaces.
\newblock {\em Math. Sci. (Springer)}, 6(46), 2012.

\bibitem{B2}
A.~Branciari.
\newblock A fixed point theorem of {B}anach-{C}accioppoli type on a class of
  generalized metric spaces.
\newblock {\em Publ. Math. Debrecen}, 57:31--37, 2000.

\bibitem{BKGK}
L.~Budhia, M.~Kir, D.~Gopal, and H.~Kiziltun\c{c}.
\newblock New fixed point results in rectangular metric space and application
  to fractional calculus.
\newblock {\em Tbilisi Math. J.}, 10(1):91--104, 2017.

\bibitem{C5}
LB. \'{C}iri\'{c}.
\newblock A new fixed-point theorem for contractive mappings.
\newblock {\em Publ. Inst. Math. (Beograd)}, 30:25--27, 1981.

\bibitem{C4}
S.~Czerwik.
\newblock Contraction mapping in $b$-metric spaces.
\newblock {\em Acta Math. Inform. Univ. Ostrav.}, 1(1):5--11, 1993.

\bibitem{C6}
S.~Czerwik.
\newblock Nonlinear set-valued contraction mappings in $b$-metric spaces.
\newblock {\em Atti Semin. Mat. Fis. Univ. Modena}, 46:263--276, 1998.

\bibitem{DD}
P.~Das and L.K. Dey.
\newblock Fixed point of contractive mappings in generalized metric spaces.
\newblock {\em Math. Slovaca}, 59(4):499--504, 2009.

\bibitem{DIRV}
H.S. Ding, M.~Imdad, S.~Radenovi\'c, and J.~Vujakovi\'c.
\newblock On some fixed point results in $b$-metric, rectangular and
  $b$-rectangular metric spaces.
\newblock {\em Arab. J. Math. Sci.}, 22:151--164, 2016.

\bibitem{E1}
M.~Edelstein.
\newblock On fixed and periodic points under contractive mappings.
\newblock {\em J. Lond. Math. Soc.}, 37(1):74--79, 1962.

\bibitem{EKS}
I.~M. Erhan, E.~Karapinar, and T.~Sekuli\'c.
\newblock Fixed points of $ (\psi,\phi)$ contractions on rectangular metric
  spaces.
\newblock {\em Fixed Point Theory Appl.}, 2012:138, 2012.

\bibitem{GRRS}
R.~George, S.~Radenovi\'{c}, K.P. Reshma, and S.~Shukla.
\newblock Rectangular $b$-metric space and contraction principles.
\newblock {\em J. Nonlinear Sci. Appl.}, 8(6):1005--1013, 2015.

\bibitem{HRPA}
N.~Hussain, J.R. Roshan, V.~Parvaneh, and M.~Abbas.
\newblock Common fixed point results for weak contractive mappings in ordered
  $b$-dislocated metric spaces with applications.
\newblock {\em J. Inequal. Appl.}, 2013:486, 2013.

\bibitem{KR1}
Z.~Kadelburg and S.~Radenovi\'{c}.
\newblock Fixed point results in generalized metric spaces without hausdorff
  property.
\newblock {\em Math Sci.}, 8:125, 2014.

\bibitem{KS}
P.~Kumam and W.~Sintunavarat.
\newblock The existence of fixed point theorems for partial $q$-set-valued
  quasi-contractions in $b$- metric spaces and related results.
\newblock {\em Fixed Point Theory Appl.}, 2014:226, 2014.

\bibitem{MR}
Z.D. Mitrovi\'{c} and S.~Radenovi\'{c}.
\newblock The {B}anach and {R}eich contractions in $b_v(s)$-metric spaces.
\newblock {\em J. Fixed Point Theory Appl.}, 19(4):3087--3095, 2017.

\bibitem{RPKH}
J.~R. Roshan, V.~Parvaneh, Z.~Kadelburg, and N.~Hussain.
\newblock New fixed point results in $b$-rectangular metric spaces.
\newblock {\em Nonlinear Anal. Model. Control}, 21(5):614--634, 2016.

\bibitem{SPK}
W.~Sintunavarat, S.~Plubtieng, and P.~Katchang.
\newblock Fixed point result and applications on $b$-metric space endowed with
  an arbitrary binary relation.
\newblock {\em Fixed Point Theory Appl.}, 2013:296, 2013.

\bibitem{S2}
T.~Suzuki.
\newblock The weakest contractive conditions for {E}delstein’s mappings to
  have a fixed point in complete metric spaces.
\newblock {\em J. Fixed Point Theory Appl.}, 19(4):2361--2368, 2017.

\end{thebibliography}

\end{document}